\documentclass{amsart}
\usepackage{amssymb}
\usepackage{amsmath}
\usepackage{amsfonts}
\newtheorem{theorem}{Theorem}[section]
\newtheorem{lemma}[theorem]{Lemma}

\newtheorem{prop}[theorem]{Proposition}

\newtheorem{definition}[theorem]{Definition}
\newtheorem{remark}[theorem]{Remark}

\newcommand{\tS}{{\tilde S}}

\begin{document}
\title{A continuity argument for a semilinear Skyrme model}

\author{Dan-Andrei Geba and S. G.  Rajeev }

\address{Department of Mathematics, University of Rochester, Rochester, NY 14627}
\email{dangeba@math.rochester.edu}
\address{Department of Physics and Astronomy, Department of Mathematics, University of Rochester, Rochester, NY 14627}
\email{rajeev@pas.rochester.edu}
\date{}

\begin{abstract}
We investigate a semilinear modification for the wave map problem
proposed by Adkins and Nappi \cite{AN}, and prove that in the equivariant case the solution remain
continuous at the first possible singularity. This is usually one
of the steps in proving existence of global smooth solutions for
certain equations.
\end{abstract}

\keywords{Wave maps, Skyrme model, global solutions.}
\subjclass{35L71, 81T13}

\maketitle
\section{Introduction}

Let $\phi :\mathbb{R}^{n+1}\to \it{M}$ be a map from the $n+1$
dimensional spacetime, with Lorentzian metric $g$ of signature
$(1,n)$,  to a Riemannian manifold $(M,h)$. The action of the wave
map equation, or the nonlinear $\sigma$ model in physics
terminology, is
\begin{equation}
S\,=\,\frac 12 \int  g^{\mu\nu} \,\partial_\mu\phi^i\, \partial_\nu\phi^j\,
h_{ij}(\phi)\,dg \label{awm}
\end{equation}

The initial value problem for the Euler-Lagrange equations
associated with \eqref{awm} has been intensely studied, especially
the issues of global existence and regularity for its solutions. We
mention here the pioneering works of Christodoulou -
Tahvildar-Zadeh \cite{MR1223662}, Grillakis \cite{G}, Shatah -
Tahvildar-Zadeh \cite{MR1278351}, and Struwe \cite{MR1990477}.

The particular case when $M= \mathbb{S}^3$ and  $n=3$ is of
special interest in high energy physics. The nuclei of atoms are
held together by forces mediated by the pi mesons. These are a set
of three  particles  whose masses are small compared to the nuclei
themselves, so to a first approximation they can be considered to
be massless, i.e., travelling at the speed of light. If
interactions among them are ignored, the pi mesons are described
by a field $\phi:R^{3+1}\to R^3$ satisfying the wave equation.
Interactions would add nonlinearities. A remarkable fact of
physics is that the interactions among the pi mesons are
described, to a good approximation (Gell-Mann - Levy \cite{MR0140316}, Gursey \cite{MR0118418},
\cite{MR0159685}, and Lee
\cite{MR0446201}), by considering the target manifold to be the
sphere $ \mathbb{S}^3$ and replacing the wave equation by the
corresponding wave map equation. Physicists call this the
nonlinear $\sigma$ model for historical reasons.

In order for the energy to be finite, the gradient of the field
must vanish at infinity: it tends to the same value in all
directions of spatial infinity. Thus, each instantaneous pi meson
configuration corresponds to a map $\tilde\phi:\mathbb{S}^3\to
\mathbb{S}^3$ obtained by identifying the points at spatial
infinity. A continuous map of the sphere to itself has an integer
associated with it, the winding number or element of the homotopy
group $\pi_3(\mathbb{S}^3)$. In terms of the original field, we
can write this as

\begin{equation}
Q= c \int \epsilon_{ijk}\partial_a\phi^i\partial_b\phi^j \partial_c\phi^k \epsilon^{abc}\,dx
\label{Q}
\end{equation}
where $\epsilon$ is the Levi-Civita symbol and $c$ is a normalizing constant. Small perturbations
from the constant configurations would have $Q=0$. If time
evolutions were continuous, $Q$ would be a conserved quantity, a
`topological charge'.

It was suggested by Skyrme \cite{MR1305831} in the 1960s that this
topological charge $Q$ is just the total number of neutrons and
protons in a nucleus: the baryon number. Thus, a proton (hydrogen
nucleus) would have $Q=1$, a helium nucleus would have either
$Q=3$ or $4$ (depending on the isotope), and so on. This has been
confirmed by connecting with Quantum chromodynamics (Balachandran
- Nair - Rajeev - Stern \cite{BNRS2}, \cite{BNRS}, Witten
\cite{MR717916}, \cite{MR717915}), the fundamental theory of
nuclear interactions. Numerical calculations (Battye - Sutcliffe
\cite{PhysRevLett.79.363}) also support this idea. Some essential
theoretical puzzles had to be resolved before this rather strange
idea of Skyrme could be established. These were resolved in the
mid-80's; e.g.,  by using topological notions (`anomalies')
(\cite{MR1305831}, \cite{BNRS2}, \cite{BNRS}, \cite{MR717916},
\cite{MR717915}).

One of these puzzles is that actually the wave map 
does {\em not} have continuous time evolution: it is
supercritical. Shatah \cite{MR933231} has exhibited an example of
a solution with smooth initial conditions that breaks down in finite time. Physically, this is because the
forces among the pi mesons are mainly attractive; so a
configuration of winding number one would shrink to a point,
emitting its energy as radiation carried away to infinity. At the
singularity, $Q$ would jump to zero. Since baryon number is
strictly conserved by nuclear forces, this cannot be right.

Skyrme suggested a modification of the action of the wave map that
could stabilize configurations with $Q\neq 0$. The
corresponding equations are quite intricate, being quasilinear.
What we study here is a semilinear modification of the above
action $S$, previously introduced by Adkins - Nappi
\cite{AN}, which is also expected, for physical reasons, to have
regular solutions\footnote{We consider only the case when the masses of the mesons are set to zero, which is
sufficient to understand short distance singularities.}. The idea
is to add  a short range repulsion among the pi mesons, created by
their interactions with an omega meson. Geometrically, it
describes the nonlinear coupling of the wave map with a gauge
field, the source (charge density) of the gauge field being the
density of the topological charge $Q$.

In detail, the action of the theory we investigate is

\begin{equation}
\tS\,=\,S\, +\, \frac{1}{4} \int  F^{\mu\nu} F_{\mu\nu}\,dg\,-\, \int A_\mu j^\mu \, dx
\label{S}
\end{equation}
where the omega meson is represented by the gauge potential $A=A_\mu dx^\mu$, the 2-form
$F_{\mu\nu}\,=\,\partial_\mu A_\nu - \partial_\nu A_\mu$ is its
associated electromagnetic field, and
\[
j^\mu\,=\,c \,\epsilon^{\mu\nu\rho\sigma}
\partial_\nu\phi^i \partial_\rho\phi^j \partial_\sigma\phi^k
\epsilon_{ijk}
\]
is the flux or the baryonic current\footnote{Note that $Q=\int j^0 \,dx$.}. 

Here we study time dependent equivariant maps associated
to $\tS$ with winding number 1, i.e.,
\[
g= \text{diag}(-1,1,1,1)\qquad M=\mathbb{S}^3
\]
\[
\phi(t,r,\psi, \theta)= (u(t,r),\psi, \theta)\qquad A(t,r,\psi,
\theta)=(V(t,r),0,0,0)
\]
with boundary conditions  $u(t,0)=0$, $u(t,\infty)= \pi$, and $V(t,\infty)= 0$.

Routine computations lead to
\[\aligned
\frac{\tS}{2\pi}\,=\,\int & \left[ r^2(-u_t^2+u_r^2)
+ 2\sin^2u+ \frac{\alpha^2 (u-\sin u \cos u)^2}{r^2}\right]\,dr\, dt -\\ 
&- \int \left[r V_r+\frac{\alpha (u-\sin u \cos
u)}{r}\right]^2  \,dr\, dt
\endaligned
\]
where $\alpha$ is an appropriate constant. Eliminating $V$ using its variational equation and 
scaling the coordinates by a factor of $\alpha$, we obtain the main equation satisfied by $u$:
\begin{equation}
u_{tt} - u_{rr} - \frac 2r u_r + \frac{1}{r^2} \sin 2u+
\frac{1}{r^4}(u-\sin u \cos u)(1- \cos 2u) = 0\label{main}
\end{equation}

Based on the physical intuition detailed above, \textbf{we
conjecture that smooth finite energy initial data for this equation will evolve into globally
regular solutions}. This is supported also by the existence of
static numerical solutions of winding number 1 for \eqref{S}, which
were found in \cite{AN}. Regularity at the origin forces
the initial data to vanish at $r=0$, which is consistent with the
previously imposed boundary condition, $u(t,0)=0$. 

One needs to compare \eqref{main} with the corresponding wave map
equation
\begin{equation}
u_{tt} - u_{rr} - \frac 2r u_r +  \frac{1}{r^2} \sin 2u =
0\label{wmap}
\end{equation}
for which Shatah's solution (later found by Turok and Spergel
\cite{PhysRevLett.64.2736} in close form)
\[
u(t,r)\,=\,2\arctan{\frac{r}{T_0-t}}
\]
provides an example of smooth initial data that develop
singularities in finite time.

The goal of this article is to take the first step in proving our
conjecture, which is to show that a smooth solution for
\eqref{main} remains continuous at the first possible singularity.
Using the fact that \eqref{main} is semilinear  and it is invariant under translations, we can assume, without any loss of generality,
that our solution starts at time $t= -1$ and breaks down at the origin $(0,0)$. Our main result is

\begin{theorem}
The solution $u$ for \eqref{main} is continuous at the origin and
\[
u(t,r)\to 0 \quad \text{as} \quad (t,r)\to (0,0)
\]\label{cont}
\end{theorem}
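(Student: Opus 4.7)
The plan is to follow the Christodoulou--Tahvildar-Zadeh / Struwe blueprint for equivariant wave maps, with the twist that the Adkins--Nappi (Skyrme-like) correction plays the decisive role at the blow-up scale. First I would derive the natural conservation law. Reading the Hamiltonian density off the action $\tS/2\pi$,
\[
e(u)\,=\,\frac{r^2}{2}(u_t^2+u_r^2)+\sin^2 u + \frac{(u-\sin u\cos u)^2}{2r^2},
\]
multiplication of \eqref{main} by $r^2 u_t$ yields $\partial_t e = \partial_r(r^2 u_r u_t)$. Integrating this over the truncated backward cone $K_{s,t}=\{(\sigma,\rho):\,s\le\sigma\le t,\;0\le\rho\le -\sigma\}$ gives
\[
E(t)-E(s)+\int_s^t\!\left[\frac{\sigma^2}{2}(u_\sigma-u_\rho)^2+\sin^2 u+\frac{(u-\sin u\cos u)^2}{2\sigma^2}\right]\!\bigg|_{\rho=-\sigma}\!d\sigma \,=\, 0,
\]
where $E(\sigma)=\int_0^{-\sigma}e(u)(\sigma,\rho)\,d\rho$. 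Since the mantle integrand is a sum of squares, $E(t)$ is nonincreasing as $t\uparrow 0$ and converges to a limit $\ell\ge 0$, while the total flux over $[s,0]$ tends to $0$ as $s\to 0^-$.

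Next I would exploit the coercivity of the Adkins--Nappi potential. The function $f(u):=u-\sin u\cos u$ satisfies $f'(u)=2\sin^2 u\ge 0$ and vanishes only at $u=0$. From the $L^1$-smallness (in $\sigma$) of $f(u(\sigma,-\sigma))^2/\sigma^2$, together with the weighted $L^2$ control of the mantle-tangential derivative $u_\sigma-u_\rho$, a short Cauchy--Schwarz/continuity argument gives $u(\sigma,-\sigma)\to 0$ as $\sigma\uparrow 0$. This establishes the desired continuity when the origin is approached along the null mantle.

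To propagate this information into the interior of the cone I would run a blow-up/compactness argument. For a sequence $\lambda_n\to 0^+$, set $v_n(\tau,\rho):=u(\lambda_n\tau,\lambda_n\rho)$. A direct computation shows that $v_n$ satisfies a rescaled equation in which the coefficient of the Skyrme term is amplified by $\lambda_n^{-2}$:
\[
(v_n)_{\tau\tau}-(v_n)_{\rho\rho}-\frac{2}{\rho}(v_n)_\rho+\frac{1}{\rho^2}\sin 2v_n+\frac{1}{\lambda_n^2\,\rho^4}(v_n-\sin v_n\cos v_n)(1-\cos 2v_n)=0.
\]
The energy-flux estimates of the previous steps, pushed forward to $v_n$ on the fixed unit cone $\{-1\le\tau\le 0,\;0\le\rho\le -\tau\}$, provide enough compactness to extract a weak limit $v_\infty$. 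The diverging prefactor forces $(v_\infty-\sin v_\infty\cos v_\infty)(1-\cos 2v_\infty)\equiv 0$ in the limit, and the boundary condition $v_\infty(\tau,0)=0$ inherited from $u(t,0)=0$, combined with the mantle trace from the previous step, pins down $v_\infty\equiv 0$. Hence $u(\lambda_n t_0,\lambda_n r_0)\to 0$ along any sequence approaching the origin from inside the cone, and the theorem follows.

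The main obstacle I anticipate is the last step: securing enough compactness for $\{v_n\}$ to pass rigorously to the limit in the nonlinear terms. Since \eqref{main} is energy-supercritical for the wave-map part, as shown by Shatah's blow-up for \eqref{wmap}, one cannot borrow standard energy-critical concentration-compactness; one must instead carefully exploit the coercivity provided by the Adkins--Nappi correction, which is precisely what makes the problem subcritical at the fine scales near the origin.
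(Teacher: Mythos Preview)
Your first two steps—the energy/flux identity and the mantle trace $u(t,-t)\to 0$—are correct and coincide with the paper's argument. The gap is in your third step. Under the rescaling $v_n(\tau,\rho)=u(\lambda_n\tau,\lambda_n\rho)$, a direct change of variables shows that the natural energy on the unit cone satisfies
\[
E_{v_n}(\tau)\;=\;\frac{1}{\lambda_n}\,E_u(\lambda_n\tau),
\]
since each of the three pieces of the density scales the same way. Because the problem is energy-supercritical, $E_u(\lambda_n\tau)$ is only known to be bounded, not $O(\lambda_n)$; hence the rescaled energies diverge and you have no $H^1$-type control on $\{v_n\}$. The only uniform a priori bound is $\|v_n\|_{L^\infty}\le C$, which is far too weak to pass to the limit in the nonlinearities or to produce a continuous $v_\infty$. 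Even granting a weak limit $v_\infty\equiv 0$, concluding $u(\lambda_n t_0,\lambda_n r_0)\to 0$ would require \emph{strong} convergence. So the compactness issue you flag is not a technicality to be patched—it is, at this scaling, the full content of the theorem.

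The paper avoids rescaling altogether. It tests \eqref{main} against three further multipliers, $r^3u_r$, $r^2u$, and $r^2tu_t$, integrates each identity over $K_T^0$, divides by $|T|$, and discards terms that vanish by the flux decay. Summing the three resulting limits and using the sign
\[
u\cdot\frac{1}{r^4}(u-\sin u\cos u)(1-\cos 2u)\ge 0
\]
yields
\[
\lim_{T\to 0^-}\Bigl(E(T)-\frac{1}{|T|}\int_{B_T} r\,u_r u_t\Bigr)=0.
\]
Expanding the left-hand side as a sum of nonnegative terms gives in particular $\int_{B_T}(u-\sin u\cos u)^2/r^4\to 0$; combined with your mantle trace and the same Cauchy--Schwarz inequality behind your second step, one obtains $I(u(t,r))\to 0$ uniformly on $B_T$, hence $u\to 0$ at the origin. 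The new idea relative to the $2{+}1$ equivariant wave-map proof is precisely the positivity above, which substitutes for the annular-energy estimate that fails here; no blow-up limit is invoked.
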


The proof follows the lines of the one for $2+1$ dimensional equivariant
wave maps (e.g., \cite{MR1168115}); however, because we are
in $3+1$ dimensions, we lose a
critical estimate which is in turn bypassed by a new argument, using the
sign of the last term in \eqref{main}:
\begin{equation}
u \cdot \frac{1}{r^4}(u-\sin u \cos u)(1- \cos 2u)\geq 0
\label{pos}
\end{equation}

In all of these problems (i.e., wave maps and Skyrme model) the initial configuration shrinks as time evolves, causing energy and winding number to accumulate at the origin. For the $2+1$ dimensional
wave map equation, enough energy is radiated away so that  the energy density at the origin is finite. In $3+1$ dimensions, the  energy density  diverges at the origin although the total energy tends to zero. For the Adkins - Nappi version of the Skyrme model, there is a repulsive part in the energy density. Our new argument shows that the energy corresponding to this part does not concentrate, allowing us to prove the continuity of the field.

\section{Main argument}
We will be working with backward truncated cones, their mantels
and bases, denoted as
$$
\aligned
&K_T^S\,=\,\{(t,r)|\ T\leq t\leq S,\, 0\leq r\leq |t|\}\\
&C_T^S\,=\,\{(t,r)|\ T\leq t\leq S,\, r= |t|\}\\
&B_T\,=\,\{(t,r)|\ t=T,\, 0\leq r\leq |t|\}
\endaligned
$$
where $-1\leq T \leq S \leq 0$. For narrower bases, we use the
notation
\[
B_T(\lambda)\,=\,\{(t,r)|\ t=T,\, -\lambda t\leq r\leq -t\}
\]
where $0\leq\lambda\leq 1$.

Multiplying \eqref{main} by $r^2u_t$, $r^2u_r$, $r^3u_r$, $r^2 u$,
respectively $r^2tu_t$, we obtain the following differential
identities:

\begin{lemma}
A classical solution for \eqref{main} satisfies:
\begin{equation}
\partial_t\left(\frac{r^2}{2}(u_t^2+u_r^2)+\sin^2u+\frac{1}{2r^2}(u-\sin
u \cos u)^2\right)-\partial_r(r^2u_tu_r)=0 \label{e1}
\end{equation}

\begin{equation}
\aligned
\partial_t(r^2u_tu_r)-&\partial_r\left(\frac{r^2}{2}(u_t^2+u_r^2)- \sin^2u-\frac{1}{2r^2}(u-\sin
u \cos u)^2\right)=\\
& = r(u_r^2-u_t^2)- \frac{1}{r^3}(u-\sin u \cos u)^2
\endaligned\label{e2}
\end{equation}

\begin{equation}
\aligned
\partial_t(r^3u_tu_r)-&\partial_r\left(\frac{r^3}{2}(u_t^2+u_r^2)- r\sin^2u-\frac{1}{2r}(u-\sin
u \cos u)^2\right)=\\
&=\frac{r^2}{2}(u_r^2-3u_t^2)+\sin^2u-\frac{1}{2r^2}(u-\sin u \cos
u)^2
\endaligned\label{e3}
\end{equation}

\begin{equation}
\aligned
\partial_t(r^2uu_t)-\partial_r(r^2uu_r)&=r^2(u_t^2-u_r^2)-\\
& -u\left(\sin2u+\frac{1}{r^2}(u-\sin u \cos u)(1-\cos2u)\right)
\endaligned
\label{e4}
\end{equation}

\begin{equation}
\aligned
\partial_t&\left(\frac{tr^2}{2}(u_t^2+u_r^2)+t\sin^2u
+\frac{t}{2r^2}(u-\sin
u \cos u)^2\right)-\partial_r(tr^2u_tu_r)=\\
&=\frac{r^2}{2}(u_t^2+u_r^2)+\sin^2u+\frac{1}{2r^2}(u-\sin u \cos
u)^2
\endaligned\label{e5}
\end{equation}
\end{lemma}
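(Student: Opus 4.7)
The plan is to verify each of \eqref{e1}--\eqref{e5} by multiplying \eqref{main} by the stated multiplier and regrouping terms via the product rule, with whatever cannot be absorbed into a $(\partial_t,\partial_r)$-divergence collected on the right-hand side. The computation is entirely routine; the only delicate point is how $\partial_r$ interacts with the explicit $r^{-2}$ and $r^{-4}$ prefactors of the two nonlinearities in \eqref{main}.

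Two elementary antiderivative identities in $u$ drive the calculation:
\[
\frac{d}{du}\sin^2 u \,=\, \sin 2u, \qquad \frac{d}{du}\!\left[\tfrac{1}{2}(u-\sin u\cos u)^2\right] \,=\, (u-\sin u\cos u)(1-\cos 2u),
\]
the second because $\frac{d}{du}(u-\sin u\cos u) = 1-\cos 2u$. Consequently, $u_t$ (resp.\ $u_r$) times either nonlinearity in \eqref{main} is an exact $\partial_t$ (resp.\ $\partial_r$) of the associated potential, up to a commutator with the $r$-prefactor. For the principal part, the product rule gives
\[
r^2 u_t(u_{tt}-u_{rr}) - 2r u_t u_r \,=\, \partial_t\bigl(\tfrac{r^2}{2}(u_t^2+u_r^2)\bigr) - \partial_r(r^2 u_t u_r),
\]
\[
r^2 u_r(u_{tt}-u_{rr}) - 2r u_r^2 \,=\, \partial_t(r^2 u_t u_r) - \partial_r\bigl(\tfrac{r^2}{2}(u_t^2+u_r^2)\bigr) + r(u_t^2 - u_r^2),
\]
\[
r^2 u(u_{tt}-u_{rr}) - 2r u u_r \,=\, \partial_t(r^2 u u_t) - \partial_r(r^2 u u_r) + r^2(u_r^2 - u_t^2).
\]
Identities \eqref{e1} and \eqref{e2} drop out of the first two; \eqref{e3} is the same as \eqref{e2} with $r^2 u_r$ replaced by $r^3 u_r$, where the extra power of $r$ produces the additional $\sin^2 u - \tfrac{1}{2r^2}(u-\sin u\cos u)^2$ on the right through the commutator described below; \eqref{e4} comes from the third principal-part identity, and since $u$ is not a $u$-antiderivative of either potential, both nonlinearities survive unchanged on the right; \eqref{e5} is obtained algebraically from \eqref{e1} by multiplying through by $t$ and using $t\partial_t X = \partial_t(tX) - X$ (while $t\partial_r Y = \partial_r(tY)$), so that the energy density reappears on the right.

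The only genuine piece of bookkeeping is the commutator of $\partial_r$ with the explicit $r$-prefactors of the potentials. For instance, for \eqref{e2} one computes
\[
\frac{u_r}{r^2}(u-\sin u\cos u)(1-\cos 2u) \,=\, \partial_r\!\left[\tfrac{1}{2r^2}(u-\sin u\cos u)^2\right] + \tfrac{1}{r^3}(u-\sin u\cos u)^2,
\]
which accounts for the $-\tfrac{1}{r^3}(u-\sin u\cos u)^2$ remainder on the right-hand side of \eqref{e2}; the analogous commutators in \eqref{e3} (with $\frac{1}{r}$ and $r$ prefactors) produce the $\sin^2 u$ and $-\tfrac{1}{2r^2}(u-\sin u\cos u)^2$ remainders there. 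Beyond this arithmetic, no obstacle is anticipated.
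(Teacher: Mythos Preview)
Your proposal is correct and follows exactly the approach the paper indicates: the paper's proof consists solely of the sentence stating that one multiplies \eqref{main} by $r^2u_t$, $r^2u_r$, $r^3u_r$, $r^2u$, and $r^2tu_t$, leaving the routine rearrangements to the reader, and you have supplied those rearrangements accurately (your derivation of \eqref{e5} from \eqref{e1} via $t\partial_t X=\partial_t(tX)-X$ is equivalent to using the multiplier $r^2tu_t$ directly).
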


\begin{remark}
The argument for $2+1$ dimensional equivariant wave maps relies on
the counterparts of all five identities. For our analysis we will not
use \eqref{e2}.
\end{remark}

Next, we define the local energy and the flux associated to our
problem:
\begin{definition}
The energy of the time slice $t=T$ is defined as
\[
E(T)\,=\,\int_{B_T}\frac{1}{2}(u_t^2+u_r^2)+\frac{1}{r^2}
\sin^2u+\frac{1}{2r^4}(u-\sin u \cos u)^2
\]
while the flux between the time slices $t=T$ and $t=S$ is given by
\[
F(T,S)\,=\,\frac{1}{\sqrt 2}\int_{C_T^S}
\frac{1}{2}(u_t-u_r)^2+\frac{1}{r^2}\sin^2u+\frac{1}{2r^4}(u-\sin
u \cos u)^2
\]
\end{definition}

The classical energy estimate, obtained by integrating \eqref{e1}
over $K_T^S$,
\[
E(T)-E(S)\,=\,F(T,S)
\]
implies that the local energy is decreasing and the flux decays to
0:
\begin{equation}\label{mone}
E(S)\leq E(T)\quad \text{for}\quad -1\leq T\leq S
\end{equation}
\begin{equation}
\lim_{T\to 0-}F(T,0)\,=\,0 \label{flux}
\end{equation}

These two facts allows us to go further and show that:
\begin{prop}
The solution $u$ is bounded, with
\begin{equation}
\|u\|_{L^\infty}\leq C(E(-1)) \label{li}
\end{equation}
and
\begin{equation}
\lim_{t\to 0-} u(t,-t)=0 \label{f0}
\end{equation}
\label{uli}
\end{prop}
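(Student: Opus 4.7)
Both conclusions will follow from a single Hardy-type device built around $G(u):=u-\sin u\cos u$, the monotone antiderivative of $G'(u)=2\sin^2 u\geq 0$. Three features of $G$ matter: $G(0)=0$; $|G(u)-u|=|\sin 2u|/2\leq 1/2$, so $|G(u)|$ controls $|u|$ up to an additive $1/2$; and $|G(u)_r|=|2\sin^2 u\cdot u_r|\leq 2|u_r|$ pointwise. The structural observation is that the Adkins--Nappi density $G(u)^2/r^2$ and the radial kinetic density $r^2 u_r^2$ pair exactly in the scale-invariant Hardy inequality $\bigl(\int_0^R \eta^2/s^2\,ds\bigr)\bigl(\int_0^R s^2\eta_s^2\,ds\bigr)\gtrsim\|\eta\|_{L^\infty}^4$ for $\eta(0)=0$, which recovers the $L^\infty$ estimate that the bare $3+1$-dimensional wave map energy lacks.

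For \eqref{li}, fix $T\in[-1,0)$ and set $\eta(r):=G(u(T,r))$ on $[0,|T|]$; the boundary condition $u(T,0)=0$ forces $\eta(0)=0$, so
\[
\eta(r)^2 \,=\, 2\int_0^r \frac{\eta(s)}{s}\cdot s\,\eta_s(s)\,ds \,\leq\, 2\Bigl(\int_0^r \frac{\eta^2}{s^2}\,ds\Bigr)^{1/2}\Bigl(\int_0^r s^2\eta_s^2\,ds\Bigr)^{1/2}
\]
by Cauchy--Schwarz. The first factor is $\leq\sqrt{2E(T)}$ from the Adkins--Nappi part of the energy; the second, using $|\eta_s|\leq 2|u_s|$, is $\leq\sqrt{8E(T)}$. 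This gives $|\eta|\leq 2\sqrt{2E(-1)}$ uniformly in $(T,r)$, and \eqref{li} follows from $|u|\leq|G(u)|+1/2$.

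For \eqref{f0}, let $\psi(t):=u(t,-t)$. A tangential derivative along the cone yields $\psi'(t)=(u_t-u_r)(t,-t)$, and parametrizing $C_T^0$ by $t$ puts the three nonnegative ingredients of $F(T,0)$ into the form
\[
\int_T^0 t^2\psi'(t)^2\,dt\,\leq\,2F(T,0),\qquad \int_T^0 \frac{G(\psi(t))^2}{t^2}\,dt\,\leq\,2F(T,0),
\]
both of which tend to $0$ by \eqref{flux}. Since $\int_{T_0}^0 dt/t^2=\infty$ for every $T_0<0$, the finiteness of $\int_{-1}^0 G(\psi)^2/t^2\,dt$ forces $\liminf_{t\to 0-}|G(\psi(t))|=0$. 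Transcribing the Hardy trick of (a) to the $t$-variable, for $T_0\leq t_2<t_1<0$,
\[
|G(\psi(t_1))^2-G(\psi(t_2))^2| \,\leq\, 4\int_{t_2}^{t_1}\frac{|G(\psi)|}{|t|}\cdot|t\psi'|\,dt \,\leq\, 8F(T_0,0)\,\to\,0,
\]
so $G(\psi)^2$ is uniformly Cauchy near $t=0$. Matched to the vanishing $\liminf$ this forces $G(\psi(t))\to 0$, hence $\psi(t)\to 0$.

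The step I expect to be the crux is the first one: although the $3+1$-dimensional wave map energy is supercritical on its own, the Adkins--Nappi term paired with the radial kinetic energy has exactly the scaling needed to produce both the uniform bound and the Cauchy estimate on the cone. Once this pairing is spotted, (b) is essentially a direct transcription of (a) onto the characteristic boundary.
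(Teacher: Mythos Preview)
Your proof is correct and follows essentially the same strategy as the paper's: both pair the Adkins--Nappi term $\int G(u)^2/r^2\,dr$ with the radial kinetic energy $\int r^2 u_r^2\,dr$ via Cauchy--Schwarz, first on time slices for the $L^\infty$ bound and then along the mantle (using the flux decay) for the limit on the cone. The only cosmetic difference is the choice of auxiliary function: the paper uses the antiderivative $I(z)=\int_0^z G=\tfrac{1}{2}(z^2-\sin^2 z)$, so that $I(u(t,r))=\int_0^r G(u)\,u_r\,dr$ feeds straight into the same Cauchy--Schwarz splitting and the conclusion comes from the coercivity $I(z)\to\infty$, whereas you work with $G(u)^2$ and recover $|u|$ from $|u|\le|G(u)|+\tfrac12$.
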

\begin{proof}
For
\[
I(z)=\int_0^z (w-\sin w \cos w)\,dw = \frac{z^2-\sin^2 z}{2}
\]
we have
\[
I(0)=0 \qquad I(z)>0(z\neq 0)\qquad \lim_{|z|\to\infty}
I(z)=\infty
\]
Based on $u(t,0)= 0$ we can write
\[
I(u(t,r))=\int_0^r (u-\sin u \cos u)u_r\,dr
\]
which implies
\[
I(u(t,r))\lesssim \left(\int_{B_t} u_r^2\right)^\frac 12\cdot
\left(\int_{B_t} \frac{(u-\sin u \cos u)^2}{r^4}\right)^\frac 12
\lesssim E(t)
\]
The monotonicity of the energy \eqref{mone} then settles the
first claim.

Next, denoting $v(t)=u(t,-t)$, we use \eqref{flux} to infer
\[
\lim_{T\to 0-}\int_T^0 \frac{(v-\sin v \cos v)^2}{t^2}\ dt = 0
\]
which leads to the existence of a sequence $t_n \to 0$ for which
$v(t_n)\to 0$. For fixed $T$ and large $n$ we obtain
\[
|I(v(T))-I(v(t_n))|\,\lesssim\, \int_T^{t_n}|(v-\sin v \cos
v)\cdot v_s|\,ds\, \lesssim \,F(T,0)
\]
which proves \eqref{f0}.
\end{proof}

We use these results to reduce
the proof of Theorem \ref{cont} to the one of a local energy
estimate.

\begin{theorem}
If the solution $u$ is smooth on $K_{-1}^{t}$ for all $-1<t<0$ and
\begin{equation}
\lim_{T\to 0-}\int_{B_T} \frac{(u-\sin u \cos u)^2}{r^4} \ =0
\label{h2}
\end{equation}
then $u$ is continuous at the origin and
\[
u(t,r)\to 0 \quad \text{as} \quad (t,r)\to (0,0)
\]\label{red}
\end{theorem}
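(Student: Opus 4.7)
My plan is to parallel the auxiliary calculation in the proof of Proposition~\ref{uli}, using the function $I(z)=\frac{z^{2}-\sin^{2} z}{2}$ as a ``coordinate'' on the bounded range of $u$. Recall that $I$ is continuous, vanishes only at $z=0$, and satisfies $I'(z)=z-\sin z\cos z$. Since $u(T,0)=0$, the fundamental theorem of calculus along the slice $B_T$ yields, for every $T\in(-1,0)$ and every $r\in[0,-T]$, the pointwise representation
\[
I(u(T,r))\,=\,\int_{0}^{r}(u-\sin u\cos u)\,u_\rho\,d\rho.
\]

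Next I would apply the Cauchy-Schwarz inequality to this integral, splitting the natural weight $\rho^{2}\,d\rho$ symmetrically between the two factors, to obtain
\[
|I(u(T,r))|^{2}\,\leq\,\left(\int_{B_T}\frac{(u-\sin u\cos u)^{2}}{\rho^{4}}\right)\cdot\left(\int_{B_T}u_\rho^{2}\right).
\]
By the hypothesis~\eqref{h2} the first factor tends to $0$ as $T\to 0-$, while the second factor is dominated by $2E(T)\leq 2E(-1)$ in view of the monotonicity~\eqref{mone}. Consequently $I(u(T,r))\to 0$ \emph{uniformly} in $r\in[0,-T]$ as $T\to 0-$.

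To conclude, I would invoke the uniform bound $\|u\|_{L^{\infty}}\leq C(E(-1))$ from Proposition~\ref{uli}. The restriction of $I$ to the compact interval $[-C,C]$ is continuous, nonnegative, and vanishes only at the origin, so for every $\varepsilon>0$ there exists $\delta>0$ such that $|I(z)|<\delta$ together with $|z|\leq C$ forces $|z|<\varepsilon$. Combined with the uniform smallness of $I(u(T,\cdot))$ this gives $\sup_{0\leq r\leq -T}|u(T,r)|\to 0$ as $T\to 0-$; since any sequence $(t_n,r_n)\to(0,0)$ inside $K_{-1}^{0}\setminus\{(0,0)\}$ satisfies $0\leq r_n\leq -t_n\to 0$, this is exactly the desired convergence.

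I do not expect any serious obstacle in this reduction: the entire content of Theorem~\ref{cont} has been front-loaded into the non-concentration hypothesis~\eqref{h2}, whose proof will occupy the remainder of the paper. The only point requiring mild bookkeeping is the $r^{2}\,dr$ measure implicit in the paper's notation $\int_{B_T}$, which, happily, is exactly what causes the weights in the Cauchy-Schwarz split to align with the integrand of~\eqref{h2} and with the kinetic part of the energy controlled by~\eqref{mone}.
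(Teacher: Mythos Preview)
Your argument is correct and follows essentially the same route as the paper: represent $I(u(T,r))$ via the fundamental theorem of calculus, apply Cauchy--Schwarz with the $r^{2}\,dr$ weight so that the two factors are controlled respectively by \eqref{h2} and by the energy, and then invert $I$ on the compact range guaranteed by Proposition~\ref{uli}. The only cosmetic difference is that the paper integrates from the mantle $r=-t$ and invokes \eqref{f0} to kill the boundary term $I(u(t,-t))$, whereas you integrate from $r=0$ and use the boundary condition $u(T,0)=0$ directly; your choice is in fact marginally cleaner, since it makes \eqref{f0} unnecessary for this step.
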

\begin{proof}
We argue as in Proposition \ref{uli} to deduce
\[
\aligned I(u(t,r))\,&=\,I(u(t,-t)) + \int_{-t}^r (u-\sin u \cos
u)(t,r')\cdot u_r(t,r')\,dr'\lesssim\\
&\lesssim I(u(t,-t)) + E(t)^{\frac 12}\cdot\left(\int_{B_t}
\frac{(u-\sin u \cos u)^2}{r^4}\right)^{\frac 12}
\endaligned
\]
which obviously provides the desired conclusion based on
\eqref{mone} and \eqref{f0}.
\end{proof}

\begin{remark}
Theorem \ref{red} is the point where our argument leaves the approach
for equivariant wave maps. There, one relies on the
weaker bound
\[
I(u(t,r))\,\lesssim \,I(u(t,-t)) + E(t)
\]
and proves nonconcentration of the energy (i.e., $E(t)\to 0$
as $t \to 0$). The crucial ingredient for that analysis is
that the annular energy doesn't concentrate,
\[
E_\lambda(t)\,=\,\int_{B_t(\lambda)} e \rightarrow 0
\]
where $e$ is the energy density and $0\leq\lambda\leq 1$. This is
in turn obtained from
\begin{equation}
((rm)_t-(re)_r)^2\lesssim (e-m)(e+m) \label{el}
\end{equation}
where $m$ is the momentum density. We refer the interested
reader to \cite{MR1223662} or \cite{MR1168115} for more details.

In our case, the corresponding estimate should read
\begin{equation}
((r^2m)_t-(r^2e)_r)^2\lesssim r^2(e-m)(e+m)
\label{els}\end{equation}
where
\[
e=\frac{1}{2}(u_t^2+u_r^2)+\frac{1}{r^2}\sin^2u+\frac{1}{2r^4}(u-\sin
u \cos u)^2 \quad \qquad m=u_r u_t
\]

The left hand side in \eqref{els}, which can be obtained through \eqref{e2}, takes the form
\[
\aligned (r^2m)_t-(r^2e)_r\,=\,r(u_r^2&-u_t^2) + \frac{(u-\sin u
\cos u)^2}{r^3}\,-\,2 \sin 2u\cdot u_r \,-\\
&-\,\frac{2(u-\sin u \cos u)(1-\cos 2u)}{r^2}\,u_r
\endaligned
\]
Precisely the last term above fails to obey the bound in \eqref{els}.
This is the reason why we do not use \eqref{e2} in our argument.
\end{remark}

\section{Local energy estimates}
These estimates are deduced by integrating the differential
identities \eqref{e3}-\eqref{e5} on the backward cone $K_T^0$ and
then allow for $T \to 0$. First, we notice that for $f\in
L^\infty$ one obtains
\begin{equation}
\lim_{T\to 0-} \frac{1}{|T|} \int_{K_T^0} \frac{f}{r^2} =
\lim_{T\to 0-} \int_{B_T} \frac{f}{r^2} =0 \label{bd}
\end{equation}
which allows us to ignore certain terms in the analysis.

\begin{lemma}
For $u$ solution of \eqref{main}, smooth on $K_{-1}^{t}$ for all
$-1<t<0$, the following estimates hold:
\begin{align}
&\lim_{T\to 0-} \frac{1}{|T|} \int_{K_T^0}
\left[\frac{1}{2}(3u_t^2-u_r^2)+\frac{1}{2r^4}(u-\sin u \cos u)^2
\right]-
\frac{1}{|T|}\int_{B_T} r u_r u_t  \,=\,0\label{ie3}\\
&\lim_{T\to 0-} \frac{1}{|T|} \int_{K_T^0}
\left[u_r^2-u_t^2+u\cdot\frac{1}{r^4}(u-\sin u \cos
u)(1-\cos2u)\right]\,=\,0\label{ie4}\\
&\lim_{T\to 0-} \frac{1}{|T|} \int_{K_T^0}
\left[-\frac{1}{2}(u_t^2+u_r^2)-\frac{1}{2r^4}(u-\sin u \cos
u)^2\right]+ E(T)\,=\,0\label{ie5}
\end{align}
\end{lemma}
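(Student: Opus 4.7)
The plan is uniform across the three identities: for each of \eqref{e3}, \eqref{e4}, \eqref{e5}, I would integrate over the truncated cone $K_T^S$ with $T<S<0$, apply the divergence theorem in the $(t,r)$-plane (carrying the $r^2\,dr\,dt$ volume element implicit in the paper's notation), let $S\to 0^-$, and divide by $|T|$. The boundary of $K_T^S$ decomposes into the two bases $B_T,B_S$, the null mantel $C_T^S$, and the axis $r=0$; the axis contributes nothing thanks to $u(t,0)=0$. In each identity I would show that the upper base $B_S$ tends to $0$ as $S\to 0^-$, that the mantel contribution is $o(|T|)$ via the flux decay \eqref{flux}, and that the lower base $B_T$ yields exactly the boundary term appearing in the claim. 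The bulk remainders of the form $\sin^2 u/r^2$ and $u\sin 2u/r^2$ arising from the right-hand sides of \eqref{e3} and \eqref{e4} are dismissed by \eqref{bd} together with the $L^\infty$ bound \eqref{li}.

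For \eqref{ie5} the density in \eqref{e5} is exactly $t$ times the density in \eqref{e1}, so $B_S$ contributes $S\,E(S)\to 0$ by \eqref{mone}, the mantel is bounded in absolute value by $|T|\,F(T,0)=o(|T|)$, and $B_T$ delivers $|T|\,E(T)$. After division by $|T|$ the bulk side reproduces the two quadratic expressions appearing in \eqref{ie5} up to the term $\frac{1}{|T|}\int_{K_T^0}\frac{\sin^2 u}{r^2}$, which is $o(1)$ by \eqref{bd}.

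For \eqref{ie3} I would first rewrite the mantel density as $P_3-Q_3=-\frac{r^3}{2}(u_t-u_r)^2+r\sin^2 u+\frac{1}{2r}(u-\sin u\cos u)^2$ and note that its modulus is pointwise dominated by $r$ times the flux density of \eqref{e1}; hence the mantel contribution is $\lesssim |T|\,F(T,0)=o(|T|)$. The upper base $\int_{B_S}ru_tu_r$ is $O(|S|\,E(S))\to 0$ by Cauchy–Schwarz, the lower base produces precisely $-\int_{B_T}ru_tu_r$, and the stray $\sin^2 u/r^2$ in the bulk is absorbed by \eqref{bd}.

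For \eqref{ie4} both $\int_{B_S}uu_t$ and $\int_{B_T}uu_t$ are $O(|T|^{3/2})$ by Cauchy–Schwarz combined with \eqref{li} and $\int_{B_T}u_t^2\lesssim E(T)$; the mantel term $\int_T^0 r^2 u(u_t-u_r)(t,-t)\,dt$ is bounded by $\|u\|_{L^\infty}\,|T|^{3/2}\,F(T,0)^{1/2}=o(|T|)$; and the bulk remainder $u\sin 2u/r^2$ is eliminated by \eqref{bd} since $u\sin 2u$ is bounded. The point requiring most care is the treatment of \eqref{e4}: without the global $L^\infty$ estimate from Proposition \ref{uli}, the factor of $u$ in each boundary density cannot be absorbed, and the base and mantel contributions would decay only at rate $O(|T|)$, which is insufficient after division by $|T|$.
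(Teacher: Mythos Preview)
Your proposal is correct and follows the same route as the paper: integrate each of \eqref{e3}--\eqref{e5} over the solid cone, read off the base and mantel boundary terms, bound the mantel by $|T|$ times the flux via $r\leq |T|$ and \eqref{flux}, and discard the $\sin^2 u/r^2$ and $u\sin 2u/r^2$ remainders using \eqref{bd} and \eqref{li}. The paper writes the integration directly over $K_T^0$ and only spells out \eqref{ie3}; your version, cutting off at $S<0$ and sending $S\to 0^-$ so that the smoothness hypothesis applies on each $K_T^S$, and treating all three identities (including the $O(|T|^{3/2})$ base bound for \eqref{ie4}), is simply a more careful execution of the same argument.
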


\begin{proof}
We prove only the first estimate, the other two being treated similarly. As mentioned above, we integrate
\eqref{e3} on $K_T^0$ to infer
\[
\aligned \int_{K_T^0} \left[\frac{1}{2}(3u_t^2\, - u_r^2
)-\frac{1}{r^2}\sin^2u+\frac{1}{2r^4}(u-\sin u \cos
u)^2\right]\,=\, \int_{B_T}r u_r u_t \, +\\
+ \int_{C_T^0} \left[ r\cdot
\left(\frac{1}{2}(u_t-u_r)^2+\frac{1}{r^2}\sin^2u+\frac{1}{2r^4}(u-\sin
u \cos u)^2\right)\right]
\endaligned
\]
Using \eqref{bd} we deduce that
\[
\lim_{T\to 0-} \frac{1}{|T|} \int_{K_T^0}
\frac{1}{r^2}\sin^2u\,=\,0
\]
\eqref{ie3} follows immediately as
\[
\lim_{T\to 0-} \frac{1}{|T|} \int_{C_T^0} \left[ r\cdot
\left(\frac{1}{2}(u_t-u_r)^2+\frac{1}{r^2}\sin^2u+\frac{1}{2r^4}(u-\sin
u \cos u)^2\right)\right]\,=\,0
\]
due to \eqref{flux} and that $r\leq |T|$ in $C_T^0$.
\end{proof}

Finally, combining \eqref{ie3}-\eqref{ie5} and relying on
\eqref{pos}, we obtain:
\begin{equation}
\lim_{T\to 0-} E(T) - \frac{1}{|T|}\int_{B_T} r u_r u_t \,=\,0
\label{non}
\end{equation}
Coupling this with
\[\aligned
E(T) - \frac{1}{|T|}\int_{B_T} r u_r u_t &= \int_{B_T}
\left[\frac{1}{4}(1-\frac{r}{|T|})(u_t-u_r)^2\,+\,
\frac{1}{4}(1+\frac{r}{|T|})(u_t+u_r)^2\right]+\\
&+\int_{B_T}\left[\frac{1}{r^2}\sin^2u+\frac{1}{2r^4}(u-\sin u
\cos u)^2\right]
\endaligned\]
we deduce the sufficient condition \eqref{h2} from Theorem \ref{red},
which finishes the argument.

It is worth noting that \eqref{non} yields also
\[
\lim_{T\to 0-}
\int_{B_T}\left[ (1-\frac{r}{|T|})(u_t-u_r)^2+(u_t+u_r)^2 +\frac{1}{r^2}\sin^2u \right]=0
\]

Thus we obtain that the entire energy does not concentrate, except maybe for 
\[
\int_{B_T}\frac{r}{|T|}(u_t-u_r)^2
\] 
This question is addressed in an upcoming article \cite{GR}.

\section*{Acknowledgements}
We would like to thank Manoussos Grillakis and Daniel Tataru for stimulating  discussions in connection with this work. The first author was supported in part by the National Science Foundation Career grant DMS-0747656. The second author was supported in part by the Department of Energy  contract DE-FG02-91ER40685.

\bibliographystyle{amsplain}
\bibliography{anwb}

\end{document}